\newtheorem{conj}{Conjecture}[section]
\newtheorem{theorem}{Theorem}[section]
\theoremstyle{definition}
\newtheorem{rem}{Remark}[section]
\theoremstyle{definition}
\newtheorem{lemma}{Lemma}[section]
\theoremstyle{definition}
\begin{document}

\title[Proof of a Conjecture of Wiegold for nilpotent Lie algebras]{Proof of a Conjecture of Wiegold for nilpotent Lie algebras}

\subjclass[2010]{17B30}

\keywords{Lie algebra, Rings and algebras}

\author[Alexander Skutin]{Alexander Skutin}

\maketitle

\section{Introduction}

In this short note we confirm an analog of a conjecture of James Wiegold \cite[4.69]{K} for finite dimensional nilpotent Lie algebras.

\begin{conj}[Wiegold's conjecture]

Let $G$ be a finite $p$-group and let $|G'|>p^{n(n-1)/2}$ for some non-negative integer $n$. Then the group $G$ can be generated by the elements of breadth at least $n$. The breadth $b(x)$ of an element $x$ of a finite $p$-group $G$ is defined by the equation $|G:C_G(x)| = p^{b(x)}$, where $C_G(x)$ is the centralizer of $x$ in $G$.

\end{conj}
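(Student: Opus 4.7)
The plan is to reduce the $p$-group statement to the Lie-algebra analog that is the main theorem of this note, via the associated graded construction. Given $G$ with $|G'|>p^{n(n-1)/2}$, form the graded Lie ring $L(G)=\bigoplus_{i\ge 1}\gamma_i(G)/\gamma_{i+1}(G)$, whose bracket ideal satisfies $|[L(G),L(G)]|=|G'|>p^{n(n-1)/2}$. To apply the Lie-algebra theorem one must first pass to an honest $\mathbb F_p$-Lie algebra; this is automatic when $G$ has exponent $p$, and in general I would invoke the restricted Lie algebra attached to the Jennings filtration $D_i(G)$ and reduce to the exponent-$p$ case through a central-series argument.

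Assuming this passage, the Lie-algebra analog produces a generating family $\bar x_1,\dots,\bar x_r$ of Lie-algebra breadth $\ge n$, and by homogeneity one can take $\bar x_j\in L_1=G/\gamma_2(G)$. Any lifts $x_j\in G$ span $G/\gamma_2(G)$, hence the Frattini quotient $G/\Phi(G)$, so by Burnside's basis theorem they generate $G$.

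The remaining step is the transfer of breadth. The inequality
\[
 p^{b_L(\bar x)} \;=\; |[\bar x,L(G)]| \;\le\; |[x,G]|
\]
holds because the intersection of the subgroup $[x,G]$ with $\gamma_i(G)$ surjects onto $[\bar x,L_{i-1}]$; multiplying orders across the lower central series gives the bound. Combined with $|G:C_G(x)|=|\{[x,g]:g\in G\}|$ one would then obtain $b_G(x_j)\ge b_L(\bar x_j)\ge n$, but only after bridging the possible gap between the subgroup $[x,G]$ and the commutator set $\{[x,g]:g\in G\}$, which can be strict in nilpotency class $\ge 3$. This mismatch is the principal obstacle.

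A conceptually cleaner alternative, which I would pursue if the graded reduction does not transfer breadth sharply enough, is to run the Lie-algebra argument directly inside $G$ by induction on the nilpotency class $c$: quotient by $\gamma_c(G)\subseteq Z(G)$ and apply the inductive hypothesis to $G/\gamma_c(G)$, otherwise exploit centrality of $\gamma_c(G)$ to construct breadth-$n$ generators by multiplying existing generators with suitable central elements. The $\binom{n}{2}$ threshold then matches the linear-algebraic proof in the Lie case exactly, charging each generator $x_j$ an $n$-dimensional contribution to $G'$, and the main technical hurdle is ensuring that these contributions remain essentially independent as $x_j$ vary.
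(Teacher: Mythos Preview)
First, note that the paper does not prove Conjecture~1.1 at all: it is stated for context and its proof is attributed to the author's separate paper~\cite{Sk}. The present article is devoted entirely to the Lie-algebra analog (Conjecture~1.2 and Theorems~1.1--1.3), so there is no in-paper argument against which to compare your proposal.

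On its own merits, your graded-Lie reduction has several unclosed gaps, most of which you yourself flag but do not resolve. (i)~$L(G)=\bigoplus\gamma_i/\gamma_{i+1}$ is only a Lie ring; the sections $\gamma_i/\gamma_{i+1}$ need not be elementary abelian, so the paper's theorem over a field does not apply directly, and switching to the Jennings filtration $D_i$ destroys the identity $|[L,L]|=|G'|$ since $D_2=G'G^p$. (ii)~The assertion ``by homogeneity one can take $\bar x_j\in L_1$'' is unjustified: the Lie theorem yields generators of breadth~$\ge n$, but an inhomogeneous element $\bar x=\bar x_1+\bar x_2+\cdots$ of large breadth may have $L_1$-component $\bar x_1$ of small breadth, and it is only the $L_1$-components that lift usefully to~$G$ via Burnside's basis theorem. (iii)~For the breadth transfer you establish $|[\bar x,L]|\le |[x,G]|$ with $[x,G]$ the \emph{subgroup} generated by commutators, whereas $p^{b_G(x)}=|G:C_G(x)|$ equals the cardinality of the \emph{set} $\{[x,g]:g\in G\}$; since in class~$\ge 3$ the set can be strictly smaller than the subgroup, the chain of inequalities does not close---precisely the obstacle you identify. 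Your fallback of running the induction directly inside $G$ is the natural route and is presumably what \cite{Sk} actually does, but as written it is a statement of intent (``charge each generator an $n$-dimensional contribution'') rather than a proof; the key mechanism in the Lie case---comparing $b_I$ with $b_{\mathfrak g}$ on a codimension-one ideal and invoking Lemmas~2.3 and~2.4---would have to be reproduced for normal subgroups of index~$p$, and you have not done so.
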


An overview of this problem can be found in \cite{W}. In \cite{Br} M.R.Vaughan-Lee proved that for a finite $p$-group $G$ of breadth $b = \max_{g\in G}b(g)$, we have $|G'|\leq p^{b(b-1)/2}$. He also showed that for a finite dimensional nilpotent Lie algebra $\mathfrak{g}$ of breadth $b = \max_{g\in \mathfrak{g}}b(g)$, $\dim[\mathfrak{g},\mathfrak{g}]\leq b(b + 1)/2$.

Conjecture 1.1 was proved by the Author in \cite{Sk}. The goal of this paper is to prove the Lie algebra analog of this conjecture.

\medskip

The breadth $b(x)$ of an element $x$ of a finite dimensional Lie algebra $\mathfrak{g}$ over a field $\mathbb{F}$ is defined by the equation $\dim \mathfrak{g} - \dim C_{\mathfrak{g}}(x) = b(x)$, where $C_{\mathfrak{g}}(x)$ is the Lie centralizer of $x$ in $\mathfrak{g}$. We prove the following

\begin{conj}[Wiegold's conjecture for nilpotent Lie algebras]

Let $\mathfrak{g}$ be a nilpotent Lie algebra over a field $\mathbb{F}$ and let $\dim\mathfrak{g}'>n(n-1)/2$ for some non-negative integer $n$. Then the Lie algebra $\mathfrak{g}$ can be generated by the elements of breadth at least $n$.

\end{conj}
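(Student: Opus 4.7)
The plan is to prove the contrapositive: assume $\mathfrak{g}$ cannot be generated by elements of breadth $\ge n$, and deduce $\dim \mathfrak{g}' \le n(n-1)/2$. I first invoke the Frattini property for nilpotent Lie algebras: a subset $X \subseteq \mathfrak{g}$ generates $\mathfrak{g}$ if and only if $X + \mathfrak{g}' = \mathfrak{g}$, which is a standard consequence of the lower central series and holds over any field. Hence the assumption produces a codimension-one subspace $W$ of $\mathfrak{g}$ containing $\mathfrak{g}'$ together with every element of breadth $\ge n$; since $\mathfrak{g}' \subseteq W$, the subspace $W$ is automatically an ideal.

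I then fix $x_0 \in \mathfrak{g} \setminus W$, so that $b(x_0) \le n-1$. Decomposing $\mathfrak{g} = W \oplus \mathbb{F}x_0$ yields
\[
  \mathfrak{g}' = [W,W] + [x_0,W], \qquad \dim[x_0,W] \le b(x_0) \le n-1,
\]
giving $\dim \mathfrak{g}' \le \dim W' + (n-1)$. It would thus suffice to show $\dim W' \le (n-1)(n-2)/2$; I plan to obtain this, and more, by showing that the maximum breadth of $\mathfrak{g}$ is itself at most $n-1$, so that Vaughan-Lee applies to $\mathfrak{g}$ directly.

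The key observation is a pencil argument. For any $w \in W$ and any $t \in \mathbb{F}$, the element $x_0 + tw$ lies outside $W$, so $\mathrm{rank}(\mathrm{ad}(x_0) + t\,\mathrm{ad}(w)) \le n-1$. Every $n \times n$ minor of the matrix $\mathrm{ad}(x_0) + t\,\mathrm{ad}(w)$ is a polynomial in $t$ of degree $\le n$, and it vanishes at all $t \in \mathbb{F}$; if $|\mathbb{F}| > n$, this polynomial must be identically zero, and its coefficient of $t^{n}$, which equals the corresponding $n \times n$ minor of $\mathrm{ad}(w)$, is zero. Thus $b(w) \le n - 1$ for every $w \in W$, and combined with $b(x_0) \le n-1$ we obtain $b := \max_{y \in \mathfrak{g}} b(y) \le n-1$. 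Vaughan-Lee's bound recalled in the introduction then gives $\dim \mathfrak{g}' \le b(b+1)/2 \le (n-1)n/2 = n(n-1)/2$, contradicting the hypothesis.

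The main obstacle is the field-size assumption in the pencil step: $|\mathbb{F}| > n$ is needed to force a polynomial of degree $\le n$ vanishing on all of $\mathbb{F}$ to vanish identically. For small finite fields I would reduce to the large-field case by scalar extension to a sufficiently large $\mathbb{K} \supseteq \mathbb{F}$; both $\dim \mathfrak{g}'$ and the breadth of every $\mathbb{F}$-rational element are preserved under $\mathfrak{g} \leadsto \mathfrak{g} \otimes_{\mathbb{F}} \mathbb{K}$. The delicate point that I expect to be the technical heart is verifying that the non-generation hypothesis descends, i.e., that the hyperplane $W \otimes_{\mathbb{F}} \mathbb{K}$ still contains every element of $\mathfrak{g} \otimes_{\mathbb{F}} \mathbb{K}$ of $\mathbb{K}$-breadth at least $n$.
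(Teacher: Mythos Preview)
Your approach is genuinely different from the paper's and works cleanly when $|\mathbb{F}|>n$ (in particular for infinite fields): the pencil argument forces every $n\times n$ minor of $\mathrm{ad}(w)$ to vanish, hence the maximal breadth is $\le n-1$, and Vaughan--Lee's bound $\dim\mathfrak{g}'\le b(b+1)/2$ finishes. This is shorter than the paper's route, which does not invoke Vaughan--Lee as a black box but instead runs an induction on $\dim\mathfrak{g}$: one passes to a codimension-one ideal $I$, compares $I'$ with $\mathfrak{g}'$ via Lemmas~2.3 and~2.4, and uses the inductive hypothesis on $I$ together with a careful count of covering subalgebras. The paper's inductive scheme works uniformly over every field, and in fact yields stronger covering statements (Theorems~1.1--1.3).

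The gap you flag for small fields is real and, as stated, is not repaired by base change. The non-generation hypothesis only says that the \emph{$\mathbb{F}$-rational} points of the Zariski-open set $\{x:b(x)\ge n\}$ lie in the hyperplane $W$; there is no mechanism forcing the $\mathbb{K}$-points of this open set into $W_{\mathbb{K}}$. Concretely, each $n\times n$ minor of $\mathrm{ad}(x_0+\sum c_i w_i)$ is a polynomial of total degree $\le n$ in the $c_i$, and for $|\mathbb{F}|\le n$ its vanishing on all of $\mathbb{F}^m$ does not make it identically zero, so one cannot read off the leading coefficients. Thus the intermediate claim ``$b(w)\le n-1$ for all $w\in W$'' is exactly what is unavailable over small fields, and your scalar-extension plan needs precisely this claim as input over $\mathbb{K}$. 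If you want to push your strategy through for small $\mathbb{F}$, you would need a substitute for the pencil step that controls $\dim W'$ directly (for instance an inductive bound in the spirit of the paper's Lemma~2.3), rather than trying to bound the global breadth; once you are doing that, you are essentially reconstructing the paper's argument.
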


In this article we confirm the Conjecture 1.2 and also show that the more general results are true:

\begin{theorem}

Let $\mathfrak{g}$ be a nilpotent Lie algebra over an infinite field $\mathbb{F}$ and let $\dim\mathfrak{g}'>n(n-1)/2$ for a non-negative $n$. Then the set of elements of breadth at least $n$ cannot be covered by a finite number of proper Lie subalgebras in $\mathfrak{g}$.

\end{theorem}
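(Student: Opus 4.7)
\medskip
\noindent\textbf{Proof plan.} The plan is to identify the low-breadth locus as a Zariski-closed algebraic subset of $\mathfrak{g}$ and then exploit the fact that over an infinite field a finite union of proper linear subspaces is a proper closed subset, so cannot contain a nonempty Zariski-open set.

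First, I would fix a basis $e_1,\ldots,e_d$ of $\mathfrak{g}$ and identify $\mathfrak{g}$ with affine space $\mathbb{F}^d$ via coordinates $x=\sum x_i e_i$. In this basis the entries of the matrix of $\mathrm{ad}_x$ are linear forms in $(x_1,\ldots,x_d)$, and by rank-nullity
\[
b(x)=\dim\mathfrak{g}-\dim C_{\mathfrak{g}}(x)=\mathrm{rank}\,\mathrm{ad}_x.
\]
Hence the set $\{x\in\mathfrak{g}: b(x)<n\}$ is cut out by the simultaneous vanishing of all $n\times n$ minors of $\mathrm{ad}_x$, so it is Zariski closed; its complement $B=\{x\in\mathfrak{g}: b(x)\geq n\}$ is Zariski open.

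Next, I would invoke the Vaughan-Lee inequality recalled in the introduction to show that $B$ is nonempty. If every element had breadth less than $n$, then the maximal breadth $b=\max_{g}b(g)$ would satisfy $b\leq n-1$, and Vaughan-Lee would give $\dim\mathfrak{g}'\leq b(b+1)/2\leq n(n-1)/2$, contradicting the hypothesis $\dim\mathfrak{g}'>n(n-1)/2$.

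Finally, suppose for contradiction that $B\subseteq\bigcup_{i=1}^k\mathfrak{h}_i$ for proper Lie subalgebras $\mathfrak{h}_1,\ldots,\mathfrak{h}_k$. As linear subspaces, each $\mathfrak{h}_i$ is Zariski closed in $\mathfrak{g}$, and since $\mathbb{F}$ is infinite a finite union of proper linear subspaces is again a proper subset; thus $\bigcup_i\mathfrak{h}_i$ is a proper Zariski-closed subset of $\mathfrak{g}$. On the other hand, the affine variety $\mathfrak{g}$ is irreducible over an infinite field, so the nonempty open set $B$ is dense, forcing $\mathfrak{g}=\overline{B}\subseteq\bigcup_i\mathfrak{h}_i$, a contradiction. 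I do not foresee a serious technical obstacle: the whole argument is driven by the identification $b(x)=\mathrm{rank}\,\mathrm{ad}_x$ together with the infinite-field hypothesis. The corresponding statement over a finite field (Conjecture~1.2) is genuinely harder, since Zariski density is unavailable and one presumably needs input specific to the nilpotent structure.
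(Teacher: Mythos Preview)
Your argument is correct and takes a genuinely different route from the paper's. The paper argues by induction on $\dim\mathfrak{g}$: for any codimension-$1$ ideal $I$ distinct from the covering subalgebras, it introduces the ideal $J$ generated by those $g\in I$ with $b_I(g)=b(g)$, and applies Lemma~2.3 (to compare $I'$ with $\mathfrak{g}'$) together with Lemma~2.4 (to bound $\dim\mathfrak{g}'$ via an element outside $I$). This machinery is entirely self-contained and, crucially, is what the paper reuses to handle the finite-field Theorems~1.2 and~1.3, where your Zariski-density argument is unavailable.

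Your approach, by contrast, trades the inductive structure for two imported facts: the identification $b(x)=\operatorname{rank}\operatorname{ad}_x$ (making $\{b\geq n\}$ Zariski open) and the Vaughan--Lee inequality $\dim\mathfrak{g}'\leq b(b+1)/2$ (making that open set nonempty). Over an infinite field this yields a very short and conceptual proof of Theorem~1.1, at the cost of invoking Vaughan--Lee as a black box; the paper's proof does not use Vaughan--Lee at all. Two small points to tidy up: you tacitly assume $\mathfrak{g}$ is finite-dimensional when you fix a basis $e_1,\ldots,e_d$, so you should first reduce to that case (as the paper does, using that a finitely generated nilpotent Lie algebra of bounded class is finite-dimensional); and the ``irreducibility of $\mathbb{F}^d$'' you invoke is exactly the statement that over an infinite field a nonzero polynomial cannot vanish identically, which is worth saying explicitly since $\mathbb{F}$ need not be algebraically closed.
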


\begin{theorem}

Let $\mathfrak{g}$ be a nilpotent Lie algebra over a finite field $\mathbb{F}\not=\mathbb{F}_2$ and let $\dim\mathfrak{g}'>n(n-1)/2$ for some non-negative integer $n$. Then the set of elements of breadth at least $n$ cannot be covered by $|\mathbb{F}|-1$ proper Lie subalgebras in $\mathfrak{g}$.

\end{theorem}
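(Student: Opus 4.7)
The plan is to proceed by induction on $\dim\mathfrak{g}$, arguing by contradiction. Set $q=|\mathbb{F}|\geq 3$ and suppose that proper subalgebras $\mathfrak{h}_1,\dots,\mathfrak{h}_{q-1}$ together contain every element of breadth $\geq n$. By the Lie-algebra form of Vaughan-Lee's inequality recalled in the introduction, $\dim\mathfrak{g}'>n(n-1)/2$ forces some $x_0\in\mathfrak{g}$ with $b(x_0)\geq n$. The workhorse is the \emph{central shift}: for $z\in Z(\mathfrak{g})$ one has $\operatorname{ad}_{x_0+tz}=\operatorname{ad}_{x_0}$, so every point of the line $L=\{x_0+tz:t\in\mathbb{F}\}$ carries breadth $\geq n$. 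If $z\notin\mathfrak{h}_i$ then $|L\cap\mathfrak{h}_i|\leq 1$, so $|L\cap\bigcup_i\mathfrak{h}_i|\leq q-1<q=|L|$ and some $x_0+t_\star z$ avoids $\bigcup_i\mathfrak{h}_i$ --- the desired contradiction whenever $Z(\mathfrak{g})\not\subseteq\bigcup_i\mathfrak{h}_i$.

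Now assume $Z(\mathfrak{g})\subseteq\bigcup_i\mathfrak{h}_i$. Pick a nonzero $z\in Z(\mathfrak{g})$, preferring $z\notin\mathfrak{g}'$ when possible (so that $\dim\bar{\mathfrak{g}}'=\dim\mathfrak{g}'$ in $\bar{\mathfrak{g}}=\mathfrak{g}/\langle z\rangle$) and otherwise taking $z\in Z(\mathfrak{g})\cap\mathfrak{g}'\setminus\{0\}$, nonempty by nilpotency. The images $\bar{\mathfrak{h}}_i=\pi(\mathfrak{h}_i)$ are subalgebras of $\bar{\mathfrak{g}}$ that remain proper except when $\mathrm{codim}\,\mathfrak{h}_i=1$ and $z\notin\mathfrak{h}_i$. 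If $\dim\bar{\mathfrak{g}}'>n(n-1)/2$, the inductive hypothesis yields $\bar x\in\bar{\mathfrak{g}}$ of breadth $\geq n$ outside every proper $\bar{\mathfrak{h}}_i$; lifting produces $x\in\mathfrak{g}$ with $b_{\mathfrak{g}}(x)\geq b_{\bar{\mathfrak{g}}}(\bar x)\geq n$, and a further central shift by $z$ mops up the remaining codimension-one subalgebras $\mathfrak{h}_i$ (each missing $z$), yielding a contradiction as above.

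The main obstacle is the residual case $\dim\mathfrak{g}'=n(n-1)/2+1$ with $Z(\mathfrak{g})\subseteq\mathfrak{g}'$, in which every nonzero central element lies in $\mathfrak{g}'$ and the quotient drops $\dim\mathfrak{g}'$ to $n(n-1)/2$, destroying the inductive hypothesis at level $n$. I would handle it by invoking the inductive statement in $\bar{\mathfrak{g}}$ at level $n-1$ (valid for $n\geq 2$ since $n(n-1)/2>(n-1)(n-2)/2$; the case $n=1$ reduces to the classical fact that a vector space of dimension $\geq 2$ over $\mathbb{F}_q$ is not covered by $q-1$ proper subspaces) and exploiting the identity
\[
 b_{\mathfrak{g}}(x)\;=\;b_{\bar{\mathfrak{g}}}(\bar x)\;+\;\mathbf{1}\!\bigl[\,z\in[x,\mathfrak{g}]\,\bigr]\qquad(z\in Z(\mathfrak{g})),
\]
which upgrades a lift of breadth $n-1$ to breadth $n$ exactly when $z\in[x,\mathfrak{g}]$. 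The delicate step --- and the genuinely hard part of the proof --- is to show that among the $\bar x$ of breadth $\geq n-1$ outside $\bigcup_i\bar{\mathfrak{h}}_i$ supplied by induction there is one whose lift satisfies $z\in[x,\mathfrak{g}]$; this requires a finer analysis of the bracket map $\mathfrak{g}\otimes\mathfrak{g}\to\mathfrak{g}'$, and is where the restriction $\mathbb{F}\neq\mathbb{F}_2$ becomes essential, as the one-meeting-per-line counts throughout need $q\geq 3$ to leave a free parameter after imposing all $q-1$ hyperplane constraints.
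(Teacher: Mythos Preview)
Your argument is sound through the first two paragraphs, but the ``residual case'' you flag in the third paragraph is not a mere delicacy --- it is the entire difficulty, and you do not actually close it. You need, among the elements $\bar x$ of breadth $\geq n-1$ outside $\bigcup_i\bar{\mathfrak h}_i$, one whose lift satisfies $z\in[x,\mathfrak g]$. The set $\{x:z\in[x,\mathfrak g]\}$ has no useful algebraic shape (it is neither a subspace nor the complement of a union of few subspaces), and the inductive hypothesis at level $n-1$ gives you no control over \emph{which} high-breadth elements it produces. Without a mechanism to force $z\in\operatorname{im}(\operatorname{ad}_x)$, the upgrade from $n-1$ to $n$ simply does not follow, and the proof is incomplete.

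The paper avoids this obstruction by a different descent: rather than quotienting by a central element (which can shrink $\dim\mathfrak g'$), it passes to a codimension-one \emph{ideal} $I$. The crucial trick is to choose $I$ so that $I\cap\mathfrak h_1=I\cap\mathfrak h_2$; this collapses two of the covering subalgebras into one inside $I$, freeing up a slot. Into that slot the paper inserts the auxiliary ideal $J=\langle\,g\in I:b_I(g)=b(g)\,\rangle$. Either $J=I$, in which case Lemma~2.3 gives $\mathfrak g'=I'$ and straight induction on $I$ finishes; or $J\subsetneq I$, in which case every element of $I\setminus(J\cup\bigcup_i\mathfrak h_i)$ has breadth at most $n-2$ in $I$, so induction on $I$ at level $n-1$ with the $|\mathbb F|-1$ subalgebras $J,\ I\cap\mathfrak h_1=I\cap\mathfrak h_2,\ I\cap\mathfrak h_3,\dots$ yields $\dim I'\leq(n-1)(n-2)/2$. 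A final application of Lemma~2.4 with an element $a\notin I\cup\bigcup_i\mathfrak h_i$ (which exists by Lemma~2.1, since these are $|\mathbb F|$ codimension-one ideals) gives $\dim\mathfrak g'\leq b(a)+\dim I'\leq n(n-1)/2$. The hypothesis $|\mathbb F|\geq 3$ enters exactly in ensuring there are at least two distinct $\mathfrak h_i$ to merge. Your central-shift idea, while elegant for the easy cases, does not supply an analogue of this merging step, and that is where your argument breaks down.
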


In the particular case $\mathbb{F} = \mathbb{F}_2$ we also get a more general result:

\begin{theorem}

Let $\mathfrak{g}$ be a nilpotent Lie algebra over a field $\mathbb{F}_2$ and let $\dim\mathfrak{g}'>n(n-1)/2$ for some non-negative integer $n$. Then the set of elements of breadth at least $n$ cannot be covered by two proper Lie subalgebras in $\mathfrak{g}$, one of which has codimension at least 2 in $\mathfrak{g}$.

\end{theorem}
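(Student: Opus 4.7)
The plan is to prove Theorem 1.3 by induction on $\dim\mathfrak{g}$, adapting the strategy of Theorems 1.1 and 1.2 and using the codimension-at-least-two hypothesis on $H_1$ as a substitute for the assumption $|\mathbb{F}|\geq 3$ of Theorem 1.2. Suppose for contradiction that $B_n:=\{x\in\mathfrak{g}:b(x)\geq n\}$ is contained in $H_1\cup H_2$ with $H_1,H_2$ proper Lie subalgebras and $\operatorname{codim}_\mathfrak{g} H_1\geq 2$.

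I would first handle the small base cases $n\in\{0,1\}$ directly. For $n=0$ the set $B_0$ equals $\mathfrak{g}$, and no vector space over any field is the set-theoretic union of two proper subspaces (if $u\notin H_2$ and $v\notin H_1$, then $u+v$ lies in neither), so the conclusion is immediate. For $n=1$ we have $B_1=\mathfrak{g}\setminus Z(\mathfrak{g})$; using the structural fact that $Z(\mathfrak{g})\cap\mathfrak{g}'\ne 0$ for nonabelian nilpotent $\mathfrak{g}$, together with the codimension-two condition on $H_1$, one exhibits a noncentral element outside the purported cover.

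For the inductive step ($n\geq 2$), by nilpotency choose a nonzero $z\in Z(\mathfrak{g})\cap\mathfrak{g}'$ and pass to $\bar{\mathfrak{g}}:=\mathfrak{g}/\langle z\rangle$. Then $\dim\bar{\mathfrak{g}}'\geq\dim\mathfrak{g}'-1>(n-1)(n-2)/2$ (a routine arithmetic check using $n\geq 2$), and for every $x\in\mathfrak{g}$ the breadth of its image satisfies $b(\bar x)\geq b(x)-1$, since $\operatorname{im}(\operatorname{ad}_{\bar{\mathfrak g}}\bar x)=(\operatorname{im}(\operatorname{ad}_\mathfrak{g}x)+\langle z\rangle)/\langle z\rangle$ differs by at most one dimension. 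Hence the image of $B_n$ in $\bar{\mathfrak{g}}$ lies in $B_{n-1}(\bar{\mathfrak{g}})$ and is covered by $\bar H_1\cup\bar H_2$. If $z\in H_1$, the codimension $\operatorname{codim}_{\bar{\mathfrak g}}\bar H_1\geq 2$ is preserved and $\bar H_2$ is proper, so the induction hypothesis applied to $\bar{\mathfrak g}$ at parameter $n-1$ yields the contradiction.

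The main obstacle is the case $z\notin H_1$, in which $\operatorname{codim}_{\bar{\mathfrak g}}\bar H_1$ may drop to $1$ and the key hypothesis is destroyed. The plan is to always arrange $z\in H_1$, which is possible whenever $Z(\mathfrak{g})\cap\mathfrak{g}'\cap H_1\ne 0$. In the degenerate case where $Z(\mathfrak{g})\cap\mathfrak{g}'\cap H_1=0$ (so in particular $\dim(Z(\mathfrak{g})\cap\mathfrak{g}')\leq\operatorname{codim}H_1$), a direct combinatorial argument over $\mathbb{F}_2$ seems necessary: combine the inclusion--exclusion estimate $|\mathfrak{g}\setminus(H_1\cup H_2)|\geq|\mathfrak{g}|/4$ with the Vaughan--Lee bound $\dim\mathfrak{g}'\leq b(b+1)/2$ applied to a suitable subalgebra in order to exhibit an element of breadth at least $n$ outside $H_1\cup H_2$.
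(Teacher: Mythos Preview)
Your quotienting strategy has a fatal gap in the inductive step. You correctly observe that the image of $B_n(\mathfrak{g})$ in $\bar{\mathfrak{g}}=\mathfrak{g}/\langle z\rangle$ lands in $B_{n-1}(\bar{\mathfrak{g}})$ and in $\bar H_1\cup\bar H_2$. But to invoke the inductive hypothesis for $\bar{\mathfrak{g}}$ at parameter $n-1$, you need the \emph{reverse} containment: that all of $B_{n-1}(\bar{\mathfrak{g}})$ lies in $\bar H_1\cup\bar H_2$. This does not follow. Take any $x\in\mathfrak{g}$ with $b(x)=n-1$ and $z\notin\operatorname{im}(\operatorname{ad}_\mathfrak{g}x)$; then $b(\bar x)=n-1$, so $\bar x\in B_{n-1}(\bar{\mathfrak{g}})$, yet nothing in the hypothesis $B_n\subseteq H_1\cup H_2$ forces $x\in H_1\cup H_2$, hence $\bar x$ need not lie in $\bar H_1\cup\bar H_2$. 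Quotienting by a central element drops the dimension of $\mathfrak{g}'$ by one, but it does not uniformly drop every breadth by one, so the covering hypothesis at level $n-1$ simply cannot be rebuilt from the one at level $n$. (If instead you keep the parameter $n$ in the quotient, the covering does transfer, but then $\dim\bar{\mathfrak{g}}'>n(n-1)/2$ may fail when $\dim\mathfrak{g}'=n(n-1)/2+1$.) Your ``degenerate case'' paragraph does not address this, since the gap already occurs in the main case $z\in H_1$.

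The paper proceeds quite differently and never quotients. It first proves an auxiliary two-parameter statement (Theorem~5.1): if over a finite field the elements of breadth $\geq n$ are covered by $|\mathbb{F}|$ proper subalgebras \emph{and} the elements of breadth $\leq k$ generate $\mathfrak{g}$ (with $n\leq k+1$), then $\dim\mathfrak{g}'\leq\frac{(n-1)(n-2)}{2}+k$. The induction descends to a suitably chosen codimension-one ideal $I$, and the key device is the ideal $J\trianglelefteq\mathfrak{g}$ generated by $\{g\in I:b_I(g)=b(g)\}$ (Lemma~2.3), together with Lemma~2.4, to control $\dim\mathfrak{g}'-\dim I'$. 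Theorem~1.3 then drops out with $k=n-1$: the codimension-$\geq 2$ hypothesis on one of the two subalgebras, combined with Lemma~2.1 over $\mathbb{F}_2$, forces $\mathfrak{g}\setminus(H_1\cup H_2)$ to generate $\mathfrak{g}$, supplying condition~(2), and the bound becomes $\frac{(n-1)(n-2)}{2}+(n-1)=\frac{n(n-1)}{2}$.
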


\section{Formulations and proofs of the main Lemmas}

The breadth $b_\mathfrak{h}(x)$ of an element $x$ of a finite dimensional Lie algebra $\mathfrak{g}$ with respect to a subalgebra $\mathfrak{h}\subseteq \mathfrak{g}$ is defined by the equation $\dim\mathfrak{h} - \dim C_\mathfrak{h}(x) = b_\mathfrak{h}(x)$, where $C_\mathfrak{h}(x) = \{h\in\mathfrak{h}| [x, h]=0\}$ is the centralizer of $x$ in $\mathfrak{h}$. By definition, $b(x) = b_\mathfrak{g}(x)$.


The following two lemmas are the well-known facts in theory of Lie algebras, so we state them without proof.

\begin{lemma}

Let $\mathfrak{g}$ be a Lie algebra over a finite field $\mathbb{F}$. Then $\mathfrak{g}$ cannot be covered by $|\mathbb{F}|$ proper subalgebras. Additionally, if $\mathfrak{g}$ is covered by $|\mathbb{F}|+1$ proper subalgebras $\mathfrak{h}_1$, $\mathfrak{h}_2$, $\ldots$, $\mathfrak{h}_{|\mathbb{F}|+1}$, then each of these $\mathfrak{h}_i$ must have codimension $1$ in $\mathfrak{g}$.

\end{lemma}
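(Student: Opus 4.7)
The lemma really concerns only the underlying vector space structure of $\mathfrak{g}$: every proper Lie subalgebra is in particular a proper $\mathbb{F}$-subspace, so the statement reduces to the classical fact that a vector space over $\mathbb{F}$ cannot be written as a union of $|\mathbb{F}|$ proper subspaces, and that a union of $|\mathbb{F}|+1$ proper subspaces forces each summand to be a hyperplane. The bracket plays no role whatsoever, and the only tool I need is the standard ``affine line'' counting argument, applied in two variants.

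For the first assertion I would argue by contradiction, assuming $\mathfrak{g}=\mathfrak{h}_1\cup\dots\cup\mathfrak{h}_n$ with $n\le|\mathbb{F}|$. After discarding superfluous terms we may take the cover to be irredundant, so there exists $v\in\mathfrak{h}_1\setminus\bigcup_{j\ge 2}\mathfrak{h}_j$. Pick any $w\notin\mathfrak{h}_1$ and consider the affine line $\ell=\{w+tv:t\in\mathbb{F}\}$. Since $v\in\mathfrak{h}_1$ but $w\notin\mathfrak{h}_1$, no point of $\ell$ lies in $\mathfrak{h}_1$, so $\ell$ is covered by $\mathfrak{h}_2,\dots,\mathfrak{h}_n$. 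No $\mathfrak{h}_j$ with $j\ge 2$ can contain two distinct points of $\ell$, as their difference would be a nonzero multiple of $v$ and would force $v\in\mathfrak{h}_j$, contradicting the choice of $v$. Hence $|\mathbb{F}|=|\ell|\le n-1$, which contradicts $n\le|\mathbb{F}|$.

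For the second assertion, suppose $\mathfrak{g}=\mathfrak{h}_1\cup\dots\cup\mathfrak{h}_{|\mathbb{F}|+1}$; the first part forces the cover to be irredundant. Fix any index $i$, pick $v_i\in\mathfrak{h}_i\setminus\bigcup_{j\ne i}\mathfrak{h}_j$, and rerun the counting: for every $w\notin\mathfrak{h}_i$ the $|\mathbb{F}|$ points of the line $w+\mathbb{F}v_i$ are distributed exactly one per subalgebra $\mathfrak{h}_j$ with $j\ne i$. Hence $\mathfrak{g}\setminus\mathfrak{h}_i\subseteq\mathfrak{h}_j+\mathbb{F}v_i$ for each such $j$; writing an arbitrary $h\in\mathfrak{h}_i$ as $(h+w)-w$ for some $w\notin\mathfrak{h}_i$ (noting that $h+w\notin\mathfrak{h}_i$ as well) upgrades this to $\mathfrak{g}=\mathfrak{h}_j+\mathbb{F}v_i$, and since $v_i\notin\mathfrak{h}_j$ this yields that $\mathfrak{h}_j$ has codimension $1$ in $\mathfrak{g}$. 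Letting $i$ range over two distinct indices covers every subalgebra in the list.

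The whole argument is elementary linear algebra, and I do not expect any genuine obstacle; the two small points requiring care are the reduction to an irredundant cover in the first part and the additive trick needed to extend the inclusion $\mathfrak{g}\setminus\mathfrak{h}_i\subseteq\mathfrak{h}_j+\mathbb{F}v_i$ to all of $\mathfrak{g}$ before reading off the codimension.
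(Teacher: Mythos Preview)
Your argument is correct and is the standard affine-line counting proof of the underlying linear-algebra fact. Note, however, that the paper does not actually prove this lemma: it is listed among ``well-known facts in theory of Lie algebras'' and stated without proof, so there is no authorial argument to compare against. Your write-up could serve as a self-contained justification; the two delicate spots you flag (passing to an irredundant cover, and the additive trick $h=(h+w)-w$ to upgrade $\mathfrak{g}\setminus\mathfrak{h}_i\subseteq\mathfrak{h}_j+\mathbb{F}v_i$ to all of $\mathfrak{g}$) are handled cleanly.
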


\begin{lemma}

Let $\mathfrak{g}$ be a nilpotent Lie algebra over a field $\mathbb{F}$, such that there exists a central subalgebra $\mathfrak{f}$ of codimension $2$ in $\mathfrak{g}$. Then $\dim\mathfrak{g}'\leq 1$.

\end{lemma}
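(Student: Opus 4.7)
The plan is to exploit the fact that the quotient $\mathfrak{g}/\mathfrak{f}$ is only two-dimensional and that, because $\mathfrak{f}$ is central, the bracket on $\mathfrak{g}$ is effectively determined by the bracket of two chosen representatives. First I would pick elements $x, y \in \mathfrak{g}$ whose images form a basis of $\mathfrak{g}/\mathfrak{f}$, so that every element of $\mathfrak{g}$ admits a unique expression $\alpha x + \beta y + f$ with $\alpha, \beta \in \mathbb{F}$ and $f \in \mathfrak{f}$.

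Next I would compute the bracket of two such elements $u = \alpha_1 x + \beta_1 y + f_1$ and $v = \alpha_2 x + \beta_2 y + f_2$ by bilinear expansion. Since $[\mathfrak{f}, \mathfrak{g}] = 0$, every term containing $f_1$ or $f_2$ drops out, leaving
$$
[u, v] \;=\; (\alpha_1 \beta_2 - \alpha_2 \beta_1)\,[x, y].
$$
Therefore $\mathfrak{g}' = [\mathfrak{g}, \mathfrak{g}]$ is contained in the one-dimensional subspace $\mathbb{F}\cdot[x,y]$, and the bound $\dim\mathfrak{g}' \leq 1$ follows at once.

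I do not foresee a genuine obstacle here: once the representatives $x, y$ are fixed, the verification is a one-line computation driven entirely by the hypothesis $[\mathfrak{f}, \mathfrak{g}] = 0$. In particular, the nilpotency assumption on $\mathfrak{g}$ is not actually used — the conclusion is forced by the centrality of $\mathfrak{f}$ together with the codimension constraint $\dim \mathfrak{g}/\mathfrak{f} = 2$. This is consistent with the authors' decision to list the statement among well-known facts presented without proof.
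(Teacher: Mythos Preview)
Your argument is correct: choosing representatives $x,y$ of a basis of $\mathfrak{g}/\mathfrak{f}$ and expanding the bracket bilinearly shows immediately that $\mathfrak{g}'\subseteq\mathbb{F}\,[x,y]$, and your observation that nilpotency plays no role is accurate. The paper offers no proof of its own---it explicitly lists this lemma among ``well-known facts in theory of Lie algebras'' stated without proof---so there is nothing to compare against; your write-up is exactly the standard verification one would supply.
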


\begin{lemma}

Consider an ideal $\mathfrak{h}$ of a finite dimensional nilpotent Lie algebra $\mathfrak{g}$. Let $\mathfrak{f}$ be the ideal of $\mathfrak{g}$ generated by elements $g$ from $\mathfrak{h}$, such that $b_{\mathfrak{h}}(g) = b(g)$. Then

\begin{enumerate}
    \item If $\mathfrak{f} = \mathfrak{h}$, then $\mathfrak{g}' = \mathfrak{h}'$;
    \item If $\mathfrak{f}$ has a codimension at most $1$ in $\mathfrak{h}$, then $\mathfrak{h}'$ has a codimension at most $1$ in $\mathfrak{g}'$.
\end{enumerate}

\end{lemma}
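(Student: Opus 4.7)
The plan begins by decoding the breadth hypothesis structurally. For $g\in\mathfrak{h}$, expanding $b_\mathfrak{h}(g)=b(g)$ via $C_\mathfrak{h}(g)=\mathfrak{h}\cap C_\mathfrak{g}(g)$ and the subspace-sum dimension identity yields $\mathfrak{g}=C_\mathfrak{g}(g)+\mathfrak{h}$, or equivalently $[g,\mathfrak{g}]=[g,\mathfrak{h}]\subseteq\mathfrak{h}'$. Thus the generators of $\mathfrak{f}$ are precisely those elements of $\mathfrak{h}$ whose full $\mathfrak{g}$-commutator already lands in $\mathfrak{h}'$; after projecting to $\bar{\mathfrak{g}}:=\mathfrak{g}/\mathfrak{h}'$, each such generator becomes central. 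Since the ideal generated in $\bar{\mathfrak{g}}$ by central elements is itself central, the image $\bar{\mathfrak{f}}$ lies inside $Z(\bar{\mathfrak{g}})\cap\bar{\mathfrak{h}}$.

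For part (1), the equality $\mathfrak{f}=\mathfrak{h}$ forces $\bar{\mathfrak{h}}\subseteq Z(\bar{\mathfrak{g}})$, i.e.\ $[\mathfrak{h},\mathfrak{g}]\subseteq\mathfrak{h}'$. I would then upgrade this to $[\mathfrak{g},\mathfrak{g}]\subseteq\mathfrak{h}'$ by an induction on the nilpotency class of $\bar{\mathfrak{g}}$: the hypothesis descends to the quotient by the central subalgebra $\bar{\mathfrak{h}}$, so the class drops strictly while the ideal structure and the breadth-equality condition for the inherited generators are preserved, with the abelian base case immediate. Combined with the automatic inclusion $\mathfrak{h}'\subseteq\mathfrak{g}'$ this yields $\mathfrak{g}'=\mathfrak{h}'$.

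For part (2), the same construction gives $\bar{\mathfrak{f}}$ central in $\bar{\mathfrak{g}}$ of codimension at most one in $\bar{\mathfrak{h}}$. Choose $\bar{y}\in\bar{\mathfrak{h}}\setminus\bar{\mathfrak{f}}$ in the non-trivial case, so $\bar{\mathfrak{h}}=\bar{\mathfrak{f}}+\mathbb{F}\bar{y}$. Applying part (1) to the ideal $\mathfrak{f}$ of $\mathfrak{g}$ (whose own set of self-centralizing generators coincides with $\mathfrak{f}$) bounds everything except the extra direction coming from $[\bar{y},\bar{\mathfrak{g}}]$; this $\mathrm{ad}$-image is generated modulo $\mathfrak{h}'$ by at most the single class of $\bar{y}$, so it contributes at most one dimension, giving the desired codimension-one bound on $\mathfrak{h}'$ inside $\mathfrak{g}'$.

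The principal technical hurdle is the inductive step in (1): promoting ``$\bar{\mathfrak{h}}$ is a central ideal'' to ``$\bar{\mathfrak{g}}$ is abelian'' is not formal, since nilpotent algebras can carry central ideals without being abelian themselves. The induction must therefore carefully exploit both the nilpotency of $\bar{\mathfrak{g}}$ and the persistence of the generating condition under passage to quotients by central subalgebras contained in $\bar{\mathfrak{h}}$, so that each successive layer of the lower central series of $\bar{\mathfrak{g}}$ is exhausted by the propagated generators.
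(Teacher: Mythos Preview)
Your reduction to $\bar{\mathfrak{g}}=\mathfrak{g}/\mathfrak{h}'$ and the observation that the generators of $\mathfrak{f}$ become central there are exactly how the paper begins. The gap is in what comes next. The paper's proof (and every application of this lemma in the paper) works under the tacit hypothesis that $\mathfrak{h}$ has codimension $1$ in $\mathfrak{g}$; without it the statement is simply false---take $\mathfrak{h}=Z(\mathfrak{g})$ in any non-abelian nilpotent $\mathfrak{g}$, where every element of $\mathfrak{h}$ has $b_{\mathfrak{h}}(g)=b(g)=0$, so $\mathfrak{f}=\mathfrak{h}$, yet $\mathfrak{g}'\neq 0=\mathfrak{h}'$. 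With the codimension-$1$ hypothesis in hand, part (1) is immediate: $\bar{\mathfrak{h}}=\bar{\mathfrak{f}}$ is a central ideal of codimension $1$ in $\bar{\mathfrak{g}}$, and any Lie algebra whose centre has codimension at most $1$ is abelian. Your induction on the nilpotency class is therefore unnecessary, and in fact cannot be made to work as you describe it: quotienting $\bar{\mathfrak{g}}$ by the central ideal $\bar{\mathfrak{h}}$ kills the distinguished ideal entirely, so there is no ``inherited'' breadth-equality condition to propagate.

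For part (2) the paper again argues directly: since $\mathfrak{h}$ has codimension $1$, the central ideal $\bar{\mathfrak{f}}$ has codimension at most $2$ in $\bar{\mathfrak{g}}$, and Lemma~2.2 yields $\dim\bar{\mathfrak{g}}'\le 1$, i.e.\ $\dim(\mathfrak{g}'/\mathfrak{h}')\le 1$. Your proposed route through part (1) applied to $\mathfrak{f}$ is not justified: the generators of $\mathfrak{f}$ satisfy $[g,\mathfrak{g}]=[g,\mathfrak{h}]$, not $[g,\mathfrak{g}]=[g,\mathfrak{f}]$, so there is no reason the analogue of $\mathfrak{f}$ built inside $\mathfrak{f}$ equals $\mathfrak{f}$. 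And the closing claim that $[\bar{y},\bar{\mathfrak{g}}]$ is ``generated modulo $\mathfrak{h}'$ by at most the single class of $\bar{y}$'' is not correct as stated; $\mathrm{ad}(\bar{y})(\bar{\mathfrak{g}})$ need not be one-dimensional without the codimension bound on $\bar{\mathfrak{f}}$ that Lemma~2.2 exploits.
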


\begin{proof}

Consider the factorization homomorphism $\pi:\mathfrak{g}\to\mathfrak{g}/\mathfrak{h}'$. For each element $g$ in $\mathfrak{h}$, such that $b_{\mathfrak{h}}(g) = b(g)$ we have $\{[g,g']|g'\in\mathfrak{g}\} = \{[g,h]|h\in\mathfrak{h}\}\subseteq\mathfrak{h}'$, so $\pi(g)$ lies in the center of $\pi(\mathfrak{g})$ and $\pi(\mathfrak{f})$ is the central Lie subalgebra of $\pi(\mathfrak{g})$. If $\mathfrak{f} = \mathfrak{h}$, then $\pi(\mathfrak{f}) = \pi(\mathfrak{h})$ is the central Lie subalgebra of $\pi(\mathfrak{g})$ of codimension $1$, so $\pi(\mathfrak{g})$ is abelian and $\mathfrak{g}' = \mathfrak{h}'$. If $\mathfrak{f}$ has a codimension at most $1$ in $\mathfrak{h}$, then $\pi(\mathfrak{f})$ is the central Lie subalgebra of $\pi(\mathfrak{g})$ of codimension at most $2$, so from Lemma 2.2 we get $\dim\mathfrak{g}'\leq 1$ and $\mathfrak{h}'$ has a codimension at most $1$ in $\mathfrak{g}'$.

\end{proof}

\begin{lemma}

Let $\mathfrak{g}$ be a finite dimensional Lie algebra and let $\mathfrak{h}$ be its ideal of codimension $1$ in $\mathfrak{g}$. Then for any element $x$ from the set $\mathfrak{g}\setminus\mathfrak{h}$ we have $\dim\mathfrak{g}'\leq b(x) + \dim\mathfrak{h}'$.

\end{lemma}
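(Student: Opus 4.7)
The plan is to use the codimension-$1$ decomposition $\mathfrak{g}=\mathbb{F}x\oplus\mathfrak{h}$, which is valid for any $x\in\mathfrak{g}\setminus\mathfrak{h}$ since $\mathfrak{h}$ has codimension one. Expanding the commutator subalgebra by bilinearity and using that $\mathfrak{h}$ is an ideal, I obtain
\[
\mathfrak{g}' = [\mathbb{F}x+\mathfrak{h},\,\mathbb{F}x+\mathfrak{h}] = [x,\mathfrak{h}] + \mathfrak{h}',
\]
where $[x,\mathfrak{h}]$ denotes the image of the linear map $\operatorname{ad}x\colon\mathfrak{h}\to\mathfrak{h}$. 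Taking dimensions, this gives the crude bound $\dim\mathfrak{g}'\le\dim[x,\mathfrak{h}]+\dim\mathfrak{h}'$, and so it remains to identify $\dim[x,\mathfrak{h}]$ with $b(x)$.

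By the rank–nullity theorem applied to $\operatorname{ad}x|_{\mathfrak{h}}$, we have $\dim[x,\mathfrak{h}]=\dim\mathfrak{h}-\dim C_{\mathfrak{h}}(x)=b_{\mathfrak{h}}(x)$. The final step is to observe that $b_{\mathfrak{h}}(x)=b(x)$. For this, decompose an arbitrary element $y\in C_{\mathfrak{g}}(x)$ uniquely as $y=\alpha x+h$ with $h\in\mathfrak{h}$; then $[x,y]=[x,h]$ forces $h\in C_{\mathfrak{h}}(x)$, giving $C_{\mathfrak{g}}(x)=\mathbb{F}x\oplus C_{\mathfrak{h}}(x)$ (the sum is direct since $x\notin\mathfrak{h}$). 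Hence
\[
b(x)=\dim\mathfrak{g}-\dim C_{\mathfrak{g}}(x)=(\dim\mathfrak{h}+1)-(\dim C_{\mathfrak{h}}(x)+1)=b_{\mathfrak{h}}(x).
\]
Combining the two displays yields $\dim\mathfrak{g}'\le b(x)+\dim\mathfrak{h}'$, as required.

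There is no real obstacle here: the statement is a clean dimension count. The only point that needs a line of care is the identification $b(x)=b_{\mathfrak{h}}(x)$, which uses crucially that $x\notin\mathfrak{h}$ so that the extra dimension gained in passing from $\mathfrak{h}$ to $\mathfrak{g}$ is absorbed entirely into the centralizer of $x$. Note also that nilpotency is not used in this lemma; it enters only in the applications via Lemmas 2.2 and 2.3.
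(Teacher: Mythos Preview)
Your proof is correct and follows the same approach as the paper: both show $\mathfrak{g}'=[x,\mathfrak{h}]+\mathfrak{h}'$ and then bound $\dim[x,\mathfrak{h}]$ by $b(x)$. The only cosmetic differences are that the paper establishes the equality by checking that $[x,\mathfrak{h}]+\mathfrak{h}'$ is an ideal with abelian quotient (via Jacobi) rather than expanding directly by bilinearity as you do, and it uses the cruder inequality $\dim[x,\mathfrak{h}]\le\dim[x,\mathfrak{g}]=b(x)$ where you prove the sharper equality $\dim[x,\mathfrak{h}]=b_{\mathfrak{h}}(x)=b(x)$.
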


\begin{proof}

The set $V=\{[x, h]| h\in\mathfrak{h}\}$ form a vector space with the dimension not bigger than $b(x)$. So it is enough to prove that $\mathfrak{g}' = V+\mathfrak{h}'$. This follows from the following properties of the set $V+\mathfrak{h}'$ :

\begin{enumerate}
    \item $V+\mathfrak{h}'$ is a vector space;
    \item $V+\mathfrak{h}'$ is an ideal in $\mathfrak{g}$ : $[[x, h], g]+\mathfrak{h}' = \overbrace{[x, \underbrace{[h, g]}_{\in\mathfrak{h}}]}^{\in V}+\overbrace{[h, \underbrace{[x, g]}_{\in\mathfrak{h}}]}^{\in\mathfrak{h}'}+\mathfrak{h}'$;
    \item Lie algebra $\mathfrak{g}/(V+\mathfrak{h}')$ is abelian.
\end{enumerate}
These facts imply the Lemma 2.4.

\end{proof}

\section{Proof of Theorem 1.1}

Assume the converse. Let the proper subalgebras $\mathfrak{h}_1$, $\mathfrak{h}_2$, $\ldots$, $\mathfrak{h}_k$ cover all the elements of breadth at least $n$. We will prove that $\dim\mathfrak{g}'\leq n(n-1)/2$. Considering a sufficiently large finite-dimensional subalgebra in $\mathfrak{g}$, we can assume that $\mathfrak{g}$ is finite-dimensional. The proof is by induction on $\dim\mathfrak{g}$. We can assume that $\mathfrak{h}_i$ are the maiximal ideals of codimension $1$ in $\mathfrak{g}$ (because every proper subalgebra in the nilpotent and finite dimensional Lie algebra is contained in the ideal of codimension $1$). Consider any ideal $I\not=\mathfrak{h}_i$ of codimension $1$ in $\mathfrak{g}$. Denote by $J$ the ideal of $\mathfrak{g}$ generated by elements $g$ from $I$ such that $b_I(g)=b(g)$. Consider the case when $J = I$. Applying the Lemma 2.3 we conclude that $I' = \mathfrak{g}'$. The rest follows from the induction hypothesis : the Lie algebra $I$ has smaller dimension and all its elements of breadth at least $n$ are contained in the union of proper subalgebras $I\cap \mathfrak{h}_i$ (because $b_I(x)\leq b(x)$). Now consider the case $I\not= J$. Notice that the set $I\setminus \left(J\cup_i\mathfrak{h}_i\right)$ contain only the elements of breadth at most $n-2$ (because $J$ is generated by elements $\{g\in I| b_I(g)=b(g)\}$). Thus, from the induction hypothesis applied to $I$ and its $k+1$ proper subalgebras $J$, $I\cap\mathfrak{h}_1$, $I\cap\mathfrak{h}_2$, $\ldots$, $I\cap\mathfrak{h}_k$ we conclude that $\dim I'\leq (n-1)(n-2)/2$. Finally, consider any element $a$ not lying in $I\cup_i\mathfrak{h}_i$, its breadth is less than $n$, so from the Lemma 2.4 we get $\dim\mathfrak{g}'\leq b(a) + \dim I'\leq\frac{n(n-1)}{2}$.

\hfill$\Box$

\section{Proof of Theorem 1.2}

Assume the converse. Let the proper subalgebras $\mathfrak{h}_1$, $\mathfrak{h}_2$, $\ldots$, $\mathfrak{h}_{|\mathbb{F}|-1}$ cover all the elements of breadth at least $n$. We will prove that $\dim\mathfrak{g}'\leq n(n-1)/2$. Considering a sufficiently large finite-dimensional subalgebra in $\mathfrak{g}$, we can assume that $\mathfrak{g}$ is finite-dimensional. The proof is by induction on $\dim\mathfrak{g}$. We can assume that $\mathfrak{h}_i$ are the maiximal ideals of codimension $1$ in $\mathfrak{g}$ (because every proper subalgebra in the nilpotent and finite dimensional Lie algebra is contained in the ideal of codimension $1$) and $\mathfrak{h}_1\not= \mathfrak{h}_2$ (because in every non-abelian finite dimensional and nilpotent Lie algebra $\mathfrak{g}$, there are at least two different maximal proper subalgebras). Consider any ideal $I\not=\mathfrak{h}_i$ of codimension $1$ in $\mathfrak{g}$ such that $I\cap \mathfrak{h}_1 = I\cap \mathfrak{h}_2 = \mathfrak{h}_1\cap \mathfrak{h}_2$. Denote by $J$ the ideal of $\mathfrak{g}$ generated by elements $g$ from $I$ such that $b_I(g)=b(g)$. Consider the case when $J = I$. Applying the Lemma 2.3 we conclude that $I' = \mathfrak{g}'$. The rest follows from the induction hypothesis : the Lie algebra $I$ has smaller dimension and all its elements of breadth at least $n$ are contained in the union of proper subalgebras $I\cap \mathfrak{h}_i$ (because $b_I(x)\leq b(x)$). Now consider the case $I\not= J$. Notice that the set $I\setminus \left(J\cup_i\mathfrak{h}_i\right)$ contain only the elements of breadth at most $n-2$ (because $J$ is generated by elements $\{g\in I| b_I(g)=b(g)\}$). Thus, from the induction hypothesis applied to $I$ and its $|\mathbb{F}|-1$ proper subalgebras $J$, $I\cap\mathfrak{h}_1 = I\cap\mathfrak{h}_2$, $ I\cap\mathfrak{h}_3$ $\ldots$, $I\cap\mathfrak{h}_{|\mathbb{F}|-1}$ we conclude that $\dim I'\leq (n-1)(n-2)/2$. Finally, consider any element $a$ not lying in $I\cup_i\mathfrak{h}_i$ (such element exists because of the Lemma 2.1), its breadth is less than $n$, so from the Lemma 2.4 we get $\dim\mathfrak{g}'\leq b(a) + \dim I'\leq\frac{n(n-1)}{2}$. $\Box$

\section{Proof of Theorem 1.3}

\begin{theorem}

Let $\mathfrak{g}$ be a nilpotent Lie algebra over a finite field $\mathbb{F}$. Let the next two conditions hold for some integers $n\leq k + 1$

\begin{enumerate}

\item The set of all elements of the breadth at least $n$ can be covered by $|\mathbb{F}|$ subalgebras of $\mathfrak{g}$;

\item The set of elements of the breadth at most $k$ generates $\mathfrak{g}$.

\end{enumerate}

Then $\dim\mathfrak{g}'\leq\frac{(n-1)(n-2)}{2} + k$.

\end{theorem}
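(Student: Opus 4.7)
The plan is to follow the pattern of Theorems 1.1 and 1.2, proceeding by induction on $\dim \mathfrak{g}$. After reducing to finite-dimensional $\mathfrak{g}$, I assume $\mathfrak{g}$ is non-abelian (otherwise $\mathfrak{g}' = 0$ and the conclusion is trivial) and replace each $\mathfrak{h}_i$ by a codimension-$1$ ideal of $\mathfrak{g}$ containing it; non-abelianness of a nilpotent $\mathfrak{g}$ forces $\dim\mathfrak{g}/\mathfrak{g}' \geq 2$, so codimension-$1$ ideals of $\mathfrak{g}$ are plentiful.

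Following the pencil construction of Theorem 1.2, I select a codimension-$1$ ideal $I$ of $\mathfrak{g}$ with $I \neq \mathfrak{h}_i$ for all $i$ and $I \cap \mathfrak{h}_1 = I \cap \mathfrak{h}_2 = \mathfrak{h}_1 \cap \mathfrak{h}_2$. Let $J$ be the ideal of $\mathfrak{g}$ generated by $\{g \in I : b_I(g) = b(g)\}$. If $J = I$, Lemma 2.3 gives $I' = \mathfrak{g}'$, and the $I$-breadth-$\geq n$ elements of $I$ are contained in $\bigcup_i (I \cap \mathfrak{h}_i)$, a union of $|\mathbb{F}|-1$ distinct proper subalgebras of $I$. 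Applying Theorem 1.2 to $I$ (when $\mathbb{F}\neq\mathbb{F}_2$), or the inductive hypothesis of Theorem 5.1 to $I$ with parameters $(n, n-1)$ (when $\mathbb{F} = \mathbb{F}_2$, noting that the $\mathbb{F}_2$-coset $I \setminus (I \cap \mathfrak{h}_1)$ consists of $I$-breadth-$\leq n-1$ elements and $\mathbb{F}_2$-spans $I$), yields $\dim I' \leq n(n-1)/2 \leq (n-1)(n-2)/2 + k$ by $n \leq k+1$.

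If instead $J \neq I$, the $I$-breadth-$\geq n-1$ elements of $I$ lie in $J \cup \bigcup_i (I \cap \mathfrak{h}_i)$, a union of at most $|\mathbb{F}|$ distinct proper subalgebras of $I$. Applying Theorem 5.1 inductively to $I$ with parameters $(n-1, n-2)$ gives $\dim I' \leq (n-2)(n-3)/2 + (n-2) = (n-1)(n-2)/2$. Condition (2) produces an element $a \in \mathfrak{g} \setminus I$ of breadth at most $k$---since the breadth-$\leq k$ elements of $\mathfrak{g}$ generate $\mathfrak{g}$, their images span $\mathfrak{g}/I$---and Lemma 2.4 then yields the desired $\dim\mathfrak{g}' \leq b(a) + \dim I' \leq k + (n-1)(n-2)/2$.

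The main obstacle is verifying condition (2) of the inductive hypothesis for $I$ in the case $J \neq I$: that the breadth-$\leq n-2$ (in $I$-breadth) elements of $I$ generate $I$. If these elements all lie in some proper subalgebra $L \subsetneq I$, then $L \cup J \cup \bigcup_i (I \cap \mathfrak{h}_i)$ would cover $I$ by $|\mathbb{F}|+1$ proper subalgebras, and Lemma 2.1 would force each piece to have codimension $1$ in $I$; in particular $J$ would have codimension $1$ in $I$, so Lemma 2.3(2) gives $\dim\mathfrak{g}' \leq \dim I' + 1$, and a further application of Lemma 2.4 to the pair $(I, J)$ combined with induction on $\dim J$ closes this exceptional subcase. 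Coordinating the indices across all the case distinctions so that the bounds combine cleanly to $(n-1)(n-2)/2 + k$ is the most delicate technical point.
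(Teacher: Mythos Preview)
Your overall architecture matches the paper's: reduce to finite dimension, induct on $\dim\mathfrak{g}$, pass to a codimension-$1$ ideal $I$ with $I\cap\mathfrak{h}_1=I\cap\mathfrak{h}_2$, and split on whether $J=I$ or not. Your treatment of the case $J=I$ is fine (the paper applies the induction hypothesis to $I$ directly with the same parameters $(n,k)$, but your variant via Theorem~1.2 or parameters $(n,n-1)$ also works). Likewise, when the $I$-breadth-$\leq n-2$ elements do generate $I$, your application of the induction hypothesis to $I$ with parameters $(n-1,n-2)$ is exactly what the paper does in the subcase $\operatorname{codim}_I J\geq 2$.

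The gap is your ``exceptional subcase,'' i.e.\ when $J$ has codimension~$1$ in $I$. Your proposed fix---Lemma~2.4 for the pair $(I,J)$ together with ``induction on $\dim J$''---does not work as stated: there is no visible way to set up the two hypotheses of Theorem~5.1 for $J$, since you have no control over which $J$-breadth classes fall into which subalgebras of $J$. The paper instead stays with $I$: it applies the induction hypothesis to $I$ with parameters $(n-1,n-1)$. Condition~(1) holds because the $|\mathbb{F}|$ subalgebras $J,\,\mathfrak{h}_1\cap\mathfrak{h}_2,\,I\cap\mathfrak{h}_3,\ldots,I\cap\mathfrak{h}_{|\mathbb{F}|}$ cover all $I$-breadth-$\geq n-1$ elements; condition~(2) holds because $I\setminus\bigl((\mathfrak{h}_1\cap\mathfrak{h}_2)\cup\mathfrak{h}_3\cup\cdots\cup\mathfrak{h}_{|\mathbb{F}|}\bigr)$ already generates $I$ by Lemma~2.1 and consists of elements of $I$-breadth $\leq n-1$. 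This yields $\dim I'\leq\frac{(n-2)(n-3)}{2}+(n-1)$ and hence, via Lemma~2.3(2), $\dim\mathfrak{g}'\leq\frac{(n-1)(n-2)}{2}+2$.

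That bound is only good enough when $k\geq 2$, and this is something your write-up does not anticipate. The paper has to handle $k\leq 1$ (forcing $n=2$) by a separate argument: the elements of $I$ outside $J\cup(\mathfrak{h}_1\cap\mathfrak{h}_2)\cup\cdots$ are then central in $I$ and generate a codimension-$\leq 1$ subalgebra of $I$ (Lemma~2.1), so $I$ is abelian; picking $g\notin I$ with $b(g)\leq k\leq 1$ from hypothesis~(2), the subalgebra $C_{\mathfrak{g}}(g)\cap I$ is central of codimension~$\leq 2$ in $\mathfrak{g}$, and Lemma~2.2 finishes. So to close the gap you should (i)~apply induction to $I$, not $J$, with parameters $(n-1,n-1)$, and (ii)~add the separate endgame for $k\leq 1$ using Lemma~2.2.
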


\begin{proof}

Considering a sufficiently large finite-dimensional subalgebra in $\mathfrak{g}$, we can assume that $\mathfrak{g}$ is finite-dimensional. The proof is by induction on $\dim\mathfrak{g}$. Let the proper subalgebras $\mathfrak{h}_1$, $\mathfrak{h}_2$, $\ldots$, $\mathfrak{h}_{|\mathbb{F}|}$ cover all the elements of breadth at least $n$. We can assume that $\mathfrak{h}_i$ are the maiximal ideals of codimension $1$ in $\mathfrak{g}$ (because every proper subalgebra in the finite dimensional and nilpotent Lie algebra is contained in the ideal of codimension $1$) and $\mathfrak{h}_1\not= \mathfrak{h}_2$ (because in every non-abelian finite dimensional and nilpotent Lie algebra $\mathfrak{g}$, there are at least two different maximal proper subalgebras). Consider any ideal $I\not=\mathfrak{h}_i$ of codimension $1$ in $\mathfrak{g}$ such that $I\cap \mathfrak{h}_1 = I\cap \mathfrak{h}_2 = \mathfrak{h}_1\cap \mathfrak{h}_2$. Denote by $J$ the ideal of $\mathfrak{g}$ generated by elements $g$ from $I$, such that $b_I(g)=b(g)$. Consider the case when $J = I$. Applying the Lemma 2.3 we conclude that $I' = \mathfrak{g}'$. The rest follows from the induction hypothesis : the Lie algebra $I$ has smaller dimension and all its elements of breadth at least $n$ are contained in the union of proper subalgebras $I\cap \mathfrak{h}_i$ (because $b_I(x)\leq b(x)$), also the set $I\setminus\cup_{i\geq 2}\mathfrak{h}_i$ generates $I$ (because of the Lemma 2.1) and consists only of elements of breadth at most $n-1\leq k$.

So we can assume that $J$ is a proper ideal of $I$. Consider the case when the codimension of $J$ in $I$ is $1$. Apply the Lemma 2.3 to $I$, we conclude that $\dim\mathfrak{g}'\leq\dim I'+1$. Apply the induction hypothesis to the Lie algebra $I$ and to its proper subalgebras $J$, $\mathfrak{h}_1\cap\mathfrak{h}_2$, $\mathfrak{h}_3$, $\ldots$, $\mathfrak{h}_{|\mathbb{F}|}$ so we get $\dim I'\leq\frac{(n-3)(n-2)}{2} + n - 1$ (every element from the set $I\setminus\left((\mathfrak{h}_1\cap\mathfrak{h}_2)\cup\mathfrak{h}_3\cup\ldots\cup\mathfrak{h}_{|\mathbb{F}|}\right)$ is of the breadth at most $n-1$ in $I$ and this set generates $I$ (because of the Lemma 2.1), also for any element $g$ from the set $I\setminus\left(J\cup(\mathfrak{h}_1\cap\mathfrak{h}_2)\cup\mathfrak{h}_3\cup\ldots\cup\mathfrak{h}_{|\mathbb{F}|}\right)$, we have $b_I(g)\leq n-2$). So if $k\geq 2$ we get $\dim \mathfrak{g}'\leq\dim I' + 1 \leq \frac{(n-3)(n-2)}{2}+n=\frac{(n-2)(n-1)}{2}+2\leq\frac{(n-2)(n-1)}{2}+k$ and the induction step is clear. In the case $k\leq 1$, we get $n=2$ (the case $n\leq 1$ is trivial) and the set $I\setminus\left(J\cup(\mathfrak{h}_1\cap\mathfrak{h}_2)\cup\mathfrak{h}_3\cup\ldots\cup\mathfrak{h}_{|\mathbb{F}|}\right)$ is contained in the center of $I$. It is clear that the set $I\setminus\left(J\cup(\mathfrak{h}_1\cap\mathfrak{h}_2)\cup\mathfrak{h}_3\cup\ldots\cup\mathfrak{h}_{|\mathbb{F}|}\right)$ generates the central subalgebra of the codimension at most $1$ in $I$ (Lemma 2.1), so $I$ is abelian. From the conditions of Theorem 5.1 there exists an element $g\notin I$ such that $b(g)\leq k\leq 1$. So the Lie algebra $C_\mathfrak{g}(g)$ has codimension at most $1$ in $\mathfrak{g}$. Also the subalgebra $C_\mathfrak{g}(g)\cap I$ has codimension at most $2$ in $\mathfrak{g}$ and is central in $\mathfrak{g}$. Apply Lemma 2.2 to the Lie algebra $\mathfrak{g}$, so we conclude that $\dim\mathfrak{g}'\leq 1\leq\frac{(n-1)(n-2)}{2} + k$.

Eventually, we can assume that the codimension of $J$ in $I$ is greater or equal to $2$. So the set $I\setminus\left(J\cup(\mathfrak{h}_1\cap\mathfrak{h}_2)\cup\mathfrak{h}_3\cup\ldots\cup\mathfrak{h}_{|\mathbb{F}|}\right)$ generates $I$ (Lemma 2.1), thus, $I$ is generated by the elements of the breadth at most $n-2$ in $I$. And we can apply the induction hypothesis to $I$ and its subalgebras $J$, $\mathfrak{h}_1\cap\mathfrak{h}_2$, $\mathfrak{h}_3$, $\ldots$, $\mathfrak{h}_{|\mathbb{F}|}$ and conclude that $\dim I'\leq\frac{(n-3)(n-2)}{2}+n-2 = \frac{(n-2)(n-1)}{2}$. Also from the conditions of the Theorem 5.1 there exists an element $a\notin I$ such that $b(a)\leq k$. Applying the Lemma 2.4 to $I$ and $a$, we conclude that $\dim \mathfrak{g}'\leq b(a) + \dim I'\leq\frac{(n-2)(n-1)}{2} + k$.

\end{proof}

\subsection{Proof of Theorem 1.3}

Assume the converse. Let the proper subalgebras $\mathfrak{h}_1$ and $\mathfrak{h}_2$ cover all the elements of breadth at least $n$ and $\mathfrak{h}_2$ has codimension bigger or equal to $2$ in $\mathfrak{g}$. We will prove that $\dim\mathfrak{g}'\leq n(n-1)/2$. From Lemma 2.1 the set $\mathfrak{g}\setminus(\mathfrak{h}_1\cup \mathfrak{h}_2)$ generates $\mathfrak{g}$. All the elements from $\mathfrak{g}\setminus(\mathfrak{h}_1\cup \mathfrak{h}_2)$ have breadth at most $n-1$ in $\mathfrak{g}$, thus, we can apply Theorem 5.1 to $\mathfrak{g}$, $\mathfrak{h}_1$, $\mathfrak{h}_2$, $k = n-1$ and get $\dim\mathfrak{g}'\leq \frac{(n-1)(n-2)}{2}+n-1=\frac{n(n-1)}{2}$. $\Box$

\begin{rem}

In fact, Theorem 1.2 is also a consequence of Theorem 5.1.

\end{rem}

\section{Acknowledgements}

I am grateful to Anton A. Klyachko for stating the problem.

\end{document}